\documentclass[14pt, a4paper]{amsart}

\usepackage[14pt]{extsizes}
\usepackage{amsmath}
\usepackage{amssymb}

\usepackage{enumerate}

\newtheorem{Theorem}{Theorem}

\newtheorem{Remark}{Remark}

\begin{document}

\author{D.\,V.~Alekseevsky}
\address{Higher School of Modern Mathematics, MIPT, Moscow, Klimentovsky per. 1}
\email{alekseevskii.dv@mipt.ru}

\author{A.\,N.~Lavrov}
\address{Higher School of Modern Mathematics, MIPT, Moscow, Klimentovsky per. 1}
\email{lavrov.an@mipt.ru}

\title{Einstein manifolds and extended Poincar\'{e} algebras}

\maketitle

    \begin{abstract}

        For any pseudo-Euclidean space $V \simeq \mathbb{R}^{p, q}, p \geq 3$ and module $W$ over
        the even Clifford algebra $\text{Cl}^{0}(V)$, V. C\'{o}rtes constructed noncompact homogeneous quaternion
        pseudo-Kahler space. In particular, such homogeneous space is always Einstein and for $p=3$ it is Riemannian.
        Based on this approach we construct the series of Riemannian Einstein homogoneous spaces with negative scalar
        curvature associated with $V\simeq \mathbb{R}^{p, q}, p\equiv 3 \pmod{4}$ and an irreducible $\text{Cl}^{0}(V)$-module $W$.
  
        \noindent{\bf 2010 MSC:} 53C30, 53C25, 15A66.
        
        \noindent{\bf Keywords:} homogeneous spaces, Einstein metrics, Clifford modules.

    \end{abstract}

    \footnotetext{This work is supported by the Russian Science Foundation \textnumero 25-21-00918, 
                       https://rscf.ru/project/25-21-00918/}

    \section{Introduction}

    The problem of description of Einstein metrics on manifolds is classical in differential geometry.
    Riemannian manifold is called \textit{Einstein} if the corresponding Ricci curvature tensor is a scalar multiple of
    the metric tensor. Einstein equation $\text{Ric}_{g} = \lambda \cdot g$, $\lambda \in \mathbb{R}$, is a system of
    non-linear partial differential equations of the second order. The structure of the solution space of Einstein equation
    is still not well understood (see \cite{Bes, Ber}). It is natural to require some symmetries for solutions, i. e. to consider
    metrics invariant under action of some Lie group. In the homogeneous case Einstein equation turns into the complicated
    system of algebraic equations. Following the excellent review \cite{J} we shortly describe the main results in this direction.

    Riemannian homogeneous spaces $M$ can be divided into three groups depending in the sign of the scalar curvature $\text{sc}$.
    For $\text{sc} > 0$, according to Myers's theorem, $M$ have to be compact. For such spaces the following results are
    known. Firstly, the moduli space of invariant Einstein metrics with unit volume on compact homogeneous space
    has the finite number of connected compact components (see \cite{BWZ}). Moreover, for the cases when the moduli space
    is possible to completely describe it is finite. However, the question ``is this moduli space always finite?'' is still open.
    Next, the full classification of Einstein homogeneous spaces with the positive scalar curvature is obtained only for
    dimension less or equal to 7 (see \cite{ADF, N1, N2}). Concerning the existence questions, for example, it is known that if
    $G$ is a semisimple group and $H$ is its closed subgroup then the simply connected homogeneous space $M = G/ H$
    of the dimension less or equal to $\leq 11$ admits $G$-invariant Einstein metric (see \cite{BK}). There are also sufficient
    conditions for existence of Einstein metric in terms of contraction of some simplicial complex associated with $M$ (see \cite{B, Gr}).
  
    For $\text{sc} = 0$ Riemannian homogeneous space $M$ can be either compact or non-compact, but according to
    Alekseevsky-Kimelfeld theorem (see \cite{AK}) it is necessary flat. In other words, it is always isometric to
    the Cartesian product of $n$-dimensional flat torus $\mathbb{T}^{n}$ and Euclidean space $\mathbb{R}^{m}$,
    i. e. $M \simeq \mathbb{T}^{n} \times \mathbb{R}^{m}$.

    For $\text{sc} < 0$, according to Bochner's theorem, Riemannian homogeneous spaces are non-compact.
    For Riemannian Einstein homogeneous spaces of such type the series of structural results was obtained.
    First of all, any noncompact Riemannian Einstein homogeneous space is a \textit{solvmanifold}, i. e. it is a simply
    connected solvable Lie group $S$ (which is diffeomorphic to $\mathbb{R}^{n}$) equipped with left-invariant metric.
    This statement was firstly stated as a hypothesis in 1975 year in \cite{A} and it was recently proved in \cite{BL}.
    The geometry of Riemannian solvmanifold is completely determined by the \textit{metric Lie algebra} $(\mathfrak{s}, g)$.
    Earlier it was shown in \cite{L1} that any Einstein solvmanifold is \textit{standard}, i. e. the orthogonal compliment to
    the derived Lie subalgebra $\mathfrak{a} := [\mathfrak{s}, \mathfrak{s}]^{\perp}$ is an abelian subalgebra.
    The dimension of the abelian part of $\mathfrak{a}$ is called a \textit{rank} of the standard solvmanifold,
    i. e. $\text{rk } S := \dim \mathfrak{a}$. The study of standard Einstein solvmanifolds was initiated in \cite{Heb}.
    In this paper it was that there is no more than one Einstein left-invariant metric up to isometry and homothety
    on standard solvmanifold.


    A nilpotent Lie algebra is called \textit{Einstein nilradical} if it can be represented as nilradical of a solvable Lie algebra
    which corresponds to some Einstein solvmanifold. The classification of Einstein sovmanifolds can be reduced to
    the classification of Einstein nilradicals (see \cite{W2}). It is proved in \cite{L2} that nilpotent Lie algebra $\mathfrak{n}$ is
    Einstein nilradical if and only if it admits a \textit{nilsoliton metric}. In particular, the restriction of Einstein metric of
    solvmanifold onto corresponding nilradical induces a nilsoliton metric. It is known that there is no more than one
    nilsoliton metric up to isometry and homothety on nilpotent Lie algebra.
    
    The next nilpotent Lie algebras $\mathfrak{n}$ are Einstein nilradicals:
    abelian, $\mathfrak{n}$ has a codimension one abelian ideal (see \cite{L3}),
    $\dim\mathfrak{n} \leq 6$ (see \cite{L3, W1}), the Lie algebra of strictly upper triangular matrices (see \cite{P}).
    Free and filiform Einstein nilradicals are classified in \cite{Nik2, Nik3}. The necessary and sufficient conditions for
    nilpotent Lie algebra with a nice basis (see \cite{LW, CR, G}) to be Einstein nilradical are given in \cite{Nik1}.
    
    In the present paper the infinite series of homogeneous Einstein spaces of negative scalar curvature is constructed.
    The approach is based on the construction of noncompact quternion-Kahler homogeneous spaces considered by V. C\'{o}rtes
    in \cite{C}. For pseudo-Euclidean space $V = \mathbb{R}^{p, q}$ consider a graded extension of the Poincar\'{e} algebra
    $\text{Lie}\big( \text{Isom}(V) \big)$ by a module $W$ over the even subalgebra $\text{Cl}^{0}(V)$ of
    the Clifford algebra $\text{Cl}(V)$. All such extensions called \textit{extended Poincar\'{e} algebras} are classified in \cite{AC}.
    Next, consider the further extension of the obtained Lie algebra by a derivation $D$ with eigenvalues $(0, \frac{1}{2}, 1)$.
    Denote the Lie group corresponding to constructed Lie algebra by $G$. All noncompact quaternion-Kahler homogeneous
    spaces can be obtained as quotient manifolds of the form $G/K$ where $K$ is a maximal compact subgroup and $p = 3$.
    We prove (see Theorem 3) that for $p \equiv 3 \pmod 4$\footnote{
    The case $p > 3$ is also considered in \cite{C}, but from the different point of view which leads to the construction of
    quaternion-pseudo-Kahler manifolds (i. e. with pseudo-Riemannian metric).}
    and an irreducible $\text{Cl}^{0}(V)$-module $W$ this construction gives (Riemannian) Einstein homogeneous spaces
    of negative scalar curvature which are no longer quaternionic (with respect to almost quaternionic structure considered
    by V. C\'{o}rtes).
    
    Note that since $K$ is a maximal compact subgroup the constructed homogeneous spaces can be always represented as
    solvable Lie groups equipped with left-invarariant Einstein metrics. Moreover, one can show that nilradicals of
    the coresponding solvable Lie algebras have a nice basis. Therefore, the linear conditions of \cite[Th. 3]{Nik1} are applicable in
    our case. Nevertheless, due to the dimension of the series of considered nilradicals is not bounded and due to
    complicated structure of these nilradicals the problem of solving the corresponding system of linear equations
    seems quite difficult. Our proof is based on the straithforward computation of the Ricci tensor and
    checking Eqinstein equation.

    Also it is worth to note that the considered construction of homogenoeus spaces is similar to the construction of so-called
    \textit{generalized Heisenberg groups} introduced by A. Kaplan in \cite{K} and their extensions called
    \textit{Damek-Ricci spaces} (see \cite{DR, M, BTV}). One of the differences is that in the construction of generalised Heisenberg
    group and Damek-Ricci space only positive definite signature $(p, q) = (p, 0)$ (for any $p > 0$) is considered.
    On the contrary, we consider arbitrary\footnote{
    One can show that for the case $q = 0$ our construction will give exactly the Damek-Ricci spaces.}  $q \geq 0$. 

    \section{Riemannian homogeneous space $M$}
  
    Let $V = \mathbb{R}^{p, q}$ be the standard peuso-Euclidean space with indefinite scalar product $\langle \cdot, \cdot \rangle$
    of the signature $(p, q)$. We will denote the dimensions of $V$ by $n := \dim V = p + q$. Remind that the Clifford algebra
    $\text{Cl}(V)$ is the quotient of the tensor algebra $T(V)$ by the two-sided ideal generated by elements of the form
    $v \otimes v + \langle v, v \rangle \cdot 1$, $v \in V$. The reflection $v \mapsto (-v)$ on $V$ induces
    $\mathbb{Z}_{2}$-grading $\text{Cl}(V) = \text{Cl}^{0}(V) + \text{Cl}^{1}(V)$ where $\text{Cl}^{0}(V)$ is called
    the even Clifford algebra. $\text{Pin}(V) \subset \text{Cl}(V)$ is the group generated by all vectors $v \in V$ satisfying
    $\langle v, v \rangle = \pm 1$. Its subgroup $\text{Spin}(V) := \text{Pin}(V) \cap \text{Cl}^{0}(V)$ consisting of
    all even elements is called the spin group. 

    The Lie algebra $\mathfrak{spin}(V) := \text{Lie}\big( \text{Spin}(V) \big) \simeq 
    \text{span}\{ [u, v] \ | \ u, v \in V \} \subset \text{Cl}^{0}(V)$ 
    where $[u, v] := uv - vu$ is commutator in the Clifford algebra, is isomorphic to the Lie algebra $\mathfrak{so}(V)$
    which  can be identified with the bivector space $\wedge^{2} V$ by the formula 
    \begin{equation}
        u\wedge v : w \mapsto  \langle v, w \rangle u -   \langle u, w  \rangle v. 
    \end{equation}
    Isomorphism $\mathfrak{spin}(V) \simeq \wedge^{2} V$ is given by the map $[u, v] \mapsto (-4) \cdot (u \wedge v)$
    for any $u, v \in V$. In particula, $u \cdot v \mapsto (-2) \cdot (u \wedge v)$ for $u \perp v \in V$.

    Consider an irreducible $\text{Cl}^{0}(V)$-module $W$ of the dimension $N := \dim W$. The action of $\text{Cl}^{0}(V)$
    on $W$ induces the irreducible linear representation of $\mathfrak{so}(V)$ on $W$ 
    ($\text{ad}_{u \wedge v} := -\frac{1}{2} uv$ for any orthogonal vectors $u, v \in V$).
    According to \cite[Th. 5]{C}, for $V = \mathbb{R}^{p,q}$ with $p \equiv 3 \pmod{4}$ the space of
    $\mathfrak{so}(V)$-equivariant  $V$-valued 2-forms on $W$  is one-dimensional.
    \begin{equation}
        \text{dim}(\wedge^{2}W^{*} \otimes V)^{\mathfrak{so}(V)} = 1. 
    \end{equation}
    Therefore, there exists only one (up to scalar multiplier) $\mathfrak{so}(V)$-equivariant map
    $\Pi : \bigwedge^2 W \longrightarrow V$. The map $\Pi$ defines $\mathbb{Z}_{2}$-graded Lie algebra 
    \begin{equation}
        \mathfrak{p} = \mathfrak{p}(\Pi) = \mathfrak{p}_{0} + \mathfrak{p}_{1} 
        = \big( \mathfrak{so}(V) + V \big) + W.
    \end{equation} 
    This Lie algebra is called extented Poincar\'{e} algebra and it was introduced in \cite{AC}. 
    It admits the following derivation $D$):
    \begin{equation}
        \text{ad}_{D}|_{V} = \text{Id}, \ \ \ \text{ad}_{D}|_{W} = \frac{1}{2}\text{Id}, 
        \ \ \ \text{ad}_{D}|_{\mathfrak{so}(V)} = 0.
    \end{equation} 
    This derivation determines $\mathbb{Z}_{2}$-graded Lie algebra 
    \begin{equation}
        \mathfrak{g} = \mathfrak{g}(\Pi) = \mathfrak{g}_{0} + \mathfrak{g}_{1} 
        = \big( \mathbb{R}D + \mathfrak{p}_{0} \big) + \mathfrak{p}_{1}
        = \big( \mathbb{R}D + \mathfrak{so}(V) + V \big) + W.
    \end{equation}
    Denote the corresponding simply connected Lie group by $G(\Pi)$.  Note that the identity component of
    the spin group $\text{Spin}_{0}(V) \subset \text{Spin}(V)$ can be considered as a closed Lie subgroup of $G(\Pi)$
    corresponding to the Lie subalgebra $\mathfrak{so}(V) \subset \mathfrak{g}(\Pi)$ (see \cite[Cor. 3]{C}).

    Next, denote the standard orthonormal basis of the pseudo-Euclidean space $V = \mathbb{R}^{p, q}$ by 
    $(e_{1}, .., e_{p},$ $e'_{1}, .., e'_{q})$ such that $(e_{1}, .., e_{p})$ is a basis of the subspace $E := \mathbb{R}^{p,0} \subset V$,
    while $(e'_{1}, .., e'_{q})$ is a basis of the subspace $E' := \mathbb{R}^{0, q} \subset V$. Therefore, we have the orthonormal
    decomposition $V = E \oplus E'$ which induces the decomposition  $\mathfrak{so}(V) \simeq E \wedge E + E' \wedge E' + E \wedge E'$.
    Also note that the basis determines orientation of  $V$, $E$ and $E'$. 
  
    Let $K$ be a maximal connected subgroup of $\text{Spin}_{0}(V)$ which preserves the othogonal decomposition $V = E \oplus E'$.
    We have that $K = \text{Spin}(E) \cdot \text{Spin}(E') \simeq \text{Spin}(p) \cdot \text{Spin}(q)$, in particular,  it is
    a maximal compact subgroup of $\text{Spin}_{0}(V)$ which corresponds to the Lie subalgebra
    $\mathfrak{k} := \mathfrak{so}(E) + \mathfrak{so}(E') \simeq E \wedge E + E' \wedge E' \subset \mathfrak{so}(V)$.
    As well as $\text{Spin}_{0}(V)$ the group  $K$ can be considered as a closed Lie subgroup of $G(\Pi)$.
    Now consider the reductive homogeneous space $M := G(\Pi)/K$ with the following reductive decomposition of
    the Lie algebra
    \begin{equation}\label{ReductiveDecomposition}
        \mathfrak{g}(\Pi) = \mathfrak{k} + \mathfrak{m}, \ \
        \text{where} \ \mathfrak{k} = E \wedge E + E' \wedge E', 
    \end{equation}
    \begin{equation*}
        \mathfrak{m} = \mathbb{R}D + E \wedge E' + E + E' + W.
    \end{equation*}
    Note that $\mathbb{R}D$, $E \wedge E'$, $ E$, $E'$ and $W$ are irreducible $\text{Ad}_{K}$-modules which are
    pairwise non-equivalent for $q > 1$. For the case $q = 1$ we have that $E' \simeq \mathbb{R}D$ and $E \wedge E' \simeq E$
    as $\text{Ad}_{K}$-modules. Next, the tangent space $M$ at the point $o := [eK] \in M$ is isomorphic to
    the subspace $\mathfrak{m} \subset \mathfrak{g}(\Pi)$, i. e. $T_{o} M \simeq \mathfrak{m}$. The left-invariant
    metrics on $M$ corresponds to $\text{Ad}_{K}$-invariant scalar products in $\mathfrak{m}$. Note that
    the Ricci tensor of any left-invariant metric at the point $o \in M$ can be computed in terms of the scalar product
    in $\mathfrak{m}$ and Lie bracket. 

    Following \cite{C}, we define bilinear form $b$ on $W$ in the formula
    \begin{equation}\label{canonical_form}
        b(s, s') := \langle e_{1}, [e_{2}..e_{p} s, s'] \rangle 
        \ \text{ for any } s, s' \in W.
    \end{equation}
    \begin{Theorem}\label{b_properties}
        For $p \equiv 3 \pmod{4}$ the bilinear form $b$ satisfies the properties:
        \begin{enumerate}[(i)]
            \item $b$ does not depend on the choice of positively-oriented basis of $E$,
            \item $b$ is symmetric, definite, nondegenerate form, 
            \item $E \wedge E'$ acts on $W$ by $b$-symmetric endomorphisms, 
                while $\wedge^{2}E$ and $\wedge^{2}E'$ by $b$-skew-symmetric endomorphisms,
            \item for any $v \in V$ and $s, s' \in W$ the following equality holds: \\
                $\langle [s, s'], v \rangle = b(\omega v s, s')$.
        \end{enumerate}
    \end{Theorem}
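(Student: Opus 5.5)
Here $\omega := e_1 e_2\cdots e_p\in \text{Cl}(E)$ denotes the volume element of $E$ (the statement uses $\omega$ without defining it, so I take this to be its meaning). The bracket $[s,s']$ on $W$ is $\Pi(s\wedge s')\in V$; by \cite[Th. 5]{C} it is the unique $\mathfrak{so}(V)$-equivariant skew map up to scale. The plan is to extract everything from equivariance together with the Clifford identities for $\omega$.

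\textbf{Identities for $\omega$.} Since $e_i^2=-1$ on $E$, one has
\begin{equation*}
\omega^2=(-1)^{p(p-1)/2}(-1)^p .
\end{equation*}
For $p\equiv 3 \pmod 4$ this gives $\omega^2=1$. Because $p$ is odd, $\omega$ commutes with every $e_i$ and anticommutes with every $e'_j$. Hence $\omega$ commutes with $\text{Cl}^0(E)$ and $\text{Cl}^0(E')$, and anticommutes with the elements $e_ie'_j$.

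\textbf{Step (iv) and (i).} Define $\beta(s,s'):=\langle [\omega^{-1}\cdot\,\cdot s, s'],\cdot\rangle$; it is cleaner to show directly that $\langle [s,s'],v\rangle = b(\omega v s,s')$. Write $e_2\cdots e_p=\pm e_1\omega$. Then
\begin{equation*}
b(s,s')=\pm\langle e_1,[e_1\omega s,s']\rangle .
\end{equation*}
Invariance of $\Pi$ under $K$, and in particular under $\text{Spin}(E)$, which acts transitively on oriented orthonormal frames of $E$ and fixes $\omega$, shows that $b$ is independent of the chosen oriented basis; this is (i). Next, consider the bilinear form $\tilde b(s,s'):=\langle [\omega v s,s'],v\rangle$ for unit $v\in E$. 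By the transitivity above, $\tilde b$ does not depend on $v$. Polarizing, and using $\mathfrak{so}(V)$-equivariance under the elements $e_ie'_j$, extends the identity to all of $V$. The signs must be checked carefully against $v^2=\mp1$. Equality with the normalisation in (\ref{canonical_form}) then fixes the constant. This yields (iv).

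\textbf{Step (iii).} Let $A=uv$ with $u\perp v$, acting on $W$ as $-\tfrac12 uv$. Equivariance of $\Pi$ gives
\begin{equation*}
[As,s']+[s,As']=A\cdot[s,s'],
\end{equation*}
where $A$ acts on $V$ as a skew endomorphism. Pair this with $e_1$ and use $\langle A\cdot x,e_1\rangle=-\langle x,Ae_1\rangle$. If $A\in\mathfrak{k}$, then $A$ commutes with $\omega$ and preserves the frame up to rotation, so by (i) $b$ is $A$-invariant, i.e. $b(As,s')+b(s,As')=0$: skew. If $A=e_ie'_j$, then $A$ anticommutes with $\omega$. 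Using (iv), $b(As,s')=\langle[\omega^{-1}\cdot,\cdot]\rangle$ manipulations give $b(As,s')-b(s,As')=0$: symmetric. Concretely, I would rewrite $b(s,s')=\sum_v$-free via (iv) as $b(x,s')=\langle[\omega^{-1}x',s'],\cdot\rangle$ and run the commutation.

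\textbf{Step (ii).} This is the main obstacle. \emph{Symmetry:} by (iv) with $v=e_1$, $b(\omega e_1 s,s')=\langle[s,s'],e_1\rangle$ is skew in $s,s'$. Combined with $(\omega e_1)^2=\omega^2e_1^2=-1$ and with $\omega e_1$ being $b$-skew (it lies in $\text{Cl}^0(E)$, generated by $\mathfrak{so}(E)$ together with $\omega e_1=\pm e_2\cdots e_p$), this gives
\begin{equation*}
b(s',s)=b(s,s').
\end{equation*}
\emph{Nondegeneracy:} the radical of $b$ is $\text{Cl}^0(V)$-invariant. By (iii) it is invariant under $\mathfrak{so}(V)$, which generates $\text{Cl}^0(V)$. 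Since $W$ is irreducible and $b\ne0$ (as $\Pi\ne0$), the radical is zero. \emph{Definiteness:} by (iii), $b$ is $\mathfrak{k}$-invariant and the $E\wedge E'$ part is $b$-symmetric. So the $b$-adjoint defines a Cartan involution, and $b$ restricts to an invariant form on $W$ under the compact group $K$. I would decompose $W$ into $K$-isotypic pieces; $E\wedge E'$ mixes them transitively. Suppose $b$ were indefinite. Then the positive and negative parts could be chosen $K$-invariant, and $\mathfrak{so}(V)$ acting by $b$-symmetric operators $X$ on $E\wedge E'$ would realise $\mathfrak{so}(p,q)$ inside $\mathfrak{so}(b)$ compatibly with the Cartan decomposition. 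I expect the argument to instead use a positive definite $K$-invariant inner product $h$ on $W$ for which $e_ie'_j$ is $h$-symmetric, which exists since $\text{Spin}(p+q)$ is compact. Then $b=h(Ts,s')$ with $T$ commuting with $\text{Cl}^0(V)$, so $T$ lies in $\mathbb{R}$, $\mathbb{C}$ or $\mathbb{H}$. Being $h$-symmetric, $T$ must be real scalar, whence $b=\lambda h$ is definite. The delicate point is showing $T$ is real: I would use that $T$ is $h$-self-adjoint and that, after symmetrization, $h$-self-adjoint elements of the centralizer division algebra are real.
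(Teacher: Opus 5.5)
\textbf{There is a genuine gap: the foundational step is circular.} In Step (i) you infer that $b$ is independent of the frame because $\text{Spin}(E)$ acts transitively on oriented orthonormal frames and fixes $\omega$. Equivariance of $\Pi$ gives only a transport rule: if $f=g\cdot e$ with $g\in\text{Spin}(E)$, then
\[
b_f(s,s')=\langle g e_1 g^{-1},[\,g e_2\cdots e_p g^{-1}s,s']\rangle=b_e(g^{-1}s,g^{-1}s').
\]
So $b_f=b_e$ for all $f$ is \emph{equivalent} to $\text{Spin}(E)$-invariance of $b_e$. That is the $\wedge^2E$ half of (iii), which you then deduce in Step (iii) from (i).

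The same non sequitur underlies your claim in Step (iv) that $\tilde b_v$ does not depend on the unit vector $v\in E$. The $E\wedge E'$ half of (iii) is not argued at all. Your proofs of symmetry, nondegeneracy and definiteness all use (iii) or (iv), so nothing downstream is established.

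Here is precisely what equivariance can and cannot give. Put $J=\omega e_1=-e_2\cdots e_p$, so that $b=-\langle e_1,[J\,\cdot,\cdot\,]\rangle$. Invariance of $\langle e_1,[\cdot,\cdot]\rangle$ under the stabilizer $\mathfrak{so}(e_1^{\perp})$ does yield the behaviour claimed in (iii) for $e_ie_k$, $e'_je'_l$ and $e_ie'_j$ with $i,k\ge 2$, since these commute, resp. anticommute, with $J$. For the generators $e_1e_i$ and $e_1e'_j$, which move $e_1$, equivariance alone gives no relation. This is exactly where a genuinely new input is needed.

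\textbf{How the paper handles this, and what a repair needs.} The paper does not derive (i)--(iii) itself; it cites C\'{o}rtes \cite{C} (Thm.~1 and Lemma~3). It then proves (iv) by writing $e_i=\text{ad}_{e_i\wedge e_1}e_1$ (resp. $e'_j=\text{ad}_{e'_j\wedge e_1}e_1$), applying equivariance, and using precisely the $b$-skewness of $e_1e_i$ (resp. $b$-symmetry of $e_1e'_j$) from (iii).

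A self-contained route would need something like the description of $\Pi$ via an admissible $\text{Spin}(V)$-invariant bilinear form $\beta$ on spinors, $\langle [s,t],v\rangle=\beta(vs,t)$, as in \cite{AC}. Equivalently, you need that $v\otimes s\mapsto\langle v,[s,\cdot\,]\rangle$ factors through Clifford multiplication, together with the uniqueness (2). Then $b(s,t)=\beta(\omega s,t)$, and (i), (iii), (iv) follow from the commutation rules of $\omega$ that you listed.

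\textbf{Parts that do work.} Granted (iii) and symmetry, your nondegeneracy argument is correct. So is your final definiteness argument: a Cartan-compatible inner product $h$, with $T=h^{-1}b$ in the commutant division algebra, $h$-self-adjoint and hence a real scalar.

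Symmetry can even be obtained without (iii). Since $J\in\text{Spin}(E)$ fixes $e_1$, we have $\langle e_1,[Js,Jt]\rangle=\langle e_1,[s,t]\rangle$, and together with $J^2=-1$ this gives $b(s',s)=b(s,s')$ directly. By contrast, your justification that $\omega e_1$ is $b$-skew because it lies in $\text{Cl}^0(E)$ is not valid as stated. Being generated by $b$-skew elements does not make an element $b$-skew; here it needs $(p-1)/2$ to be odd.
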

    \noindent\textit{Proof:} 
    See proof of (i)-(iii) in \cite[Thm. 1]{C} and \cite[Lemma 3]{C}. To prove (iv) it is enough to check the equality for
    all basis vectors of $V$. Note that for $v=e_{1}$ the equality follows from the definition (\ref{canonical_form}) of $b$.
    For $v=e_{i}, i = 2,.., p,$ we have
    \begin{equation*}
        \langle [s, s'], e_{i} \rangle = \langle [s, s'], \text{ad}_{e_{i} \wedge e_{1}} e_{1} \rangle
            = \langle \text{ad}_{e_{1} \wedge e_{i}} [s, s'], e_{1} \rangle
    \end{equation*}
    \begin{equation*}
            = \frac{1}{2} \langle [e_{i}e_{1} s, s'], e_{1} \rangle + \frac{1}{2} \langle [s, e_{i}e_{1} s'], e_{1} \rangle
            = \frac{1}{2} b(\omega e_{i} s, s') + \frac{1}{2} b(e_{2}..e_{p} s, e_{1}e_{i} s')
    \end{equation*}
    \begin{equation*}
            = \frac{1}{2} b(\omega e_{i} s, s') - \frac{1}{2} b(e_{1}e_{i} e_{2}..e_{p} s,  s') = b(\omega e_{i} s, s').
    \end{equation*}
    The computation for $v=e'_{j}, j =1,.., q,$ is similar
    \begin{equation*}
        \langle [s, s'], e'_{j} \rangle = \langle [s, s'], \text{ad}_{e'_{j} \wedge e_{1}}e_{1} \rangle
            = \langle \text{ad}_{e_{1} \wedge e'_{j}} [s, s'], e_{1} \rangle
    \end{equation*}
    \begin{equation*}
            = \frac{1}{2} \langle [e'_{j}e_{1} s, s'], e_{1} \rangle + \frac{1}{2} \langle [s, e'_{j}e_{1} s'], e_{1} \rangle
            = \frac{1}{2} b(\omega e'_{j} s, s') + \frac{1}{2} b(e_{2}..e_{p} s, e_{1}e'_{j} s')
    \end{equation*}
    \begin{equation*}
            = \frac{1}{2} b(\omega e_{i} s, s') + \frac{1}{2} b(e_{1}e'_{j} e_{2}..e_{p} s,  s') = b(\omega e'_{j} s, s').
    \end{equation*}
    \hfill$\Box$

    Now we consider the positive-definite scalar product $g_{V}$ in $V$ such that  
    $g_{V}|_{E} = \langle -, - \rangle|_{E}$, but $g_{V}|_{E'} = (-1)\cdot\langle -, - \rangle|_{E'}$.
    It induces the positive-definite scalar product $g_{E \wedge E'}$ in $E \wedge E'$. 
    Since the sign of the form $b$ can always be changed by replacing $\Pi$ to $(-1) \cdot \Pi$ we will assume that
    $b$ is negative-definite. We define the positive-definite scalar product in the whole $\mathfrak{m}$ in the following way
    \begin{equation}
        g = g_{\mathbb{R}D} \oplus g_{E \wedge E'} \oplus g_{V} \oplus (-b).
    \end{equation}
    Due to that $g$ is $\text{Ad}_{K}$-invariant it can be extended to the left-invariant Riemannian metric $g_{M}$
    on the homogeneous space $M = G(\Pi)/K$.

    \section{The metric $g_{M}$ is Einstein}

    \subsection{Computation of Nomizu operators}
    Ricci curvature can be expressed through so-called Nomizu operators. By definition, \textit{Nomizu operator} $L_{X}$
    associated with vector field $X$ on $M$ is the tensor field given by the formula $L_{X}Y := - \nabla_{Y}X$ where
    $Y$ is arbitrary vector field. In particular, $L_{X}|_{T_{o}M}$ is an endomorphism of the tangent space $T_{o}M \simeq \mathfrak{m}$. 
    On the other hand, any element of the Lie algbera $x \in\mathfrak{g}(\Pi)$ induces the Killing vector field $\alpha(x)$
    on $M$ which satisfies $\alpha(x)|_{o} = \pi(x) \in \mathfrak{m} \subset \mathfrak{g}(\Pi)$ where
    $\pi: \mathfrak{g}(\Pi) \longrightarrow \mathfrak{m}$ is the projection onto $\mathfrak{m}$ with respect to
    the reductive decomposition (\ref{ReductiveDecomposition}). For any element $x \in \mathfrak{g}(\Pi)$ we denote
    the endomorphism  $L_{\alpha(x)}|_{T_{o}M} \in \text{End}(\mathfrak{m})$ just by $L_{x}$. For simplicity we will also call
    such endomorphisms $L_{x}, x \in \mathfrak{g}(\Pi),$ Nomizu operators. The endomorphism $L_{x}, x \in \mathfrak{g}(\Pi),$
    satisfies the following equality for any $y, z \in \mathfrak{m}$ (see \cite{AVL})
    \begin{equation}\label{NomizuOperator}
        g(L_{x} y, z) = -\frac{1}{2} \Big( g\big(\pi([x, y]), z\big) - g\big(\pi([y, z]), x\big) + g\big(\pi([z, x]), y\big) \Big).
    \end{equation}
    One can show that these operators satisfy the following equality
    \begin{equation}\label{NomizuIdentity}
        L_{y}\pi(x) - L_{x}\pi(y) = \pi([x, y]) \text{ for any } x, y \in \mathfrak{g}(\Pi).
    \end{equation}
    Along with Nomizu operators we will also use the operators $R_{x}: \mathfrak{g}(\Pi)\rightarrow\mathfrak{m}, x \in \mathfrak{m},$
    defined as follows
    \begin{equation}
        R_{x} y := L_{y} x = L_{x}\pi(y) + \pi([x, y]). 
    \end{equation}
    The Riemann curvature tensor of the metric $g_{M}$ at $o \in M$ can be expressed through  Nomizu operators
    like this (see \cite[Sec. 2.3]{C})
    \begin{equation}\label{RiemannTensorFormula}
        R(x, y) = [L_{x}, L_{y}] + L_{[x, y]} \text{ for any } x, y \in \mathfrak{m}.
    \end{equation}

    \begin{Theorem}\label{NomizuOperatorsTheorem}
        Nomizu operators for $g_{M}$ are the following:
        \begin{enumerate}[(i)]
            \item $L_{x} = 0$ for any $x \in \mathbb{R}D + E \wedge E'$, 
            \item $L_{x} = - \text{\emph{ad}}_{x}$ for any $x \in E \wedge E + E' \wedge E'$,
            \item $L_{e_{i}} = -D \otimes e_{i}^{*} + 
                \sum\limits_{j=1}^{q} (e_{i} \wedge e'_{j}) \otimes {e'_{j}}^{*} 
                - \text{\emph{ad}}_{e_{i}} - \frac{1}{2}\omega e_{i}$,
                where $1 \leq i \leq p$,
            \item $L_{e'_{j}} = -D \otimes {e'_{j}}^{*} 
                + \sum\limits_{k=1}^{p} (e_{k} \wedge e'_{j}) \otimes e_{k}^{*} 
                - \text{\emph{ad}}_{e'_{j}} + \frac{1}{2}\omega e'_{j}$,
                where $1 \leq j \leq q$,
            \item $L_{s} = -\frac{1}{2} D \otimes s^{*} 
                  + \frac{1}{2} \sum\limits_{i=1}^{p}\sum\limits_{j=1}^{q} (e_{i} \wedge e'_{j}) \otimes \big( (e_{i} e'_{j}) s \big)^{*}
                  - \frac{1}{2}\text{\emph{ad}}_{s}|_{W} - \text{\emph{ad}}_{s}|_{\mathbb{R}D + E \wedge E'}
                  - \frac{1}{2} \sum\limits_{i=1}^{p} \omega e_{i} s \otimes e_{i}^{*} 
                  + \frac{1}{2} \sum\limits_{j=1}^{q} \omega e'_{j} s \otimes {e'_{j}}^{*}$,
        \end{enumerate}
        where the isomorphism $\mathfrak{m} \simeq \mathfrak{m}^{*}$ is induced by the scalar product $g$.
    \end{Theorem}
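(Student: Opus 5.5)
The plan is to evaluate formula (\ref{NomizuOperator}) directly on basis elements of $\mathfrak{m}$. First I will tabulate all brackets in $\mathfrak{g}(\Pi)$ together with their $\pi$-projections.

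The brackets are as follows. $[D,v]=v$ and $[D,s]=\frac12 s$. For $A\in\mathfrak{so}(V)$ we have $[A,v]=Av$ and $[A,s]=\text{ad}_A s$ (the spinor action). Finally $[V,V]=0$, $[V,W]=0$, and $[s,s']=\Pi(s\wedge s')\in V$, which Theorem \ref{b_properties}(iv) expresses through $b$. The decomposition $\mathfrak{m}=\mathbb{R}D+E\wedge E'+E+E'+W$ is $g$-orthogonal. The projection $\pi$ kills $\mathfrak{k}$-components, and these arise only from brackets inside $\mathfrak{so}(V)$, namely $[E\wedge E',E\wedge E']\subset\mathfrak{k}$.

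\emph{Parts (i) and (ii).} For (ii), with $x\in\mathfrak{k}$, the term $g(\pi[y,z],x)$ vanishes since $x\perp\mathfrak{m}$ (we extend $g$ formally by zero on $\mathfrak{k}$). The remaining terms give $-\frac12(g([x,y],z)+g([z,x],y))$. By $\text{Ad}_K$-invariance of $g$ (skewness of $\text{ad}_x$, using Theorem \ref{b_properties}(iii) on $W$) this equals $-g([x,y],z)$, so $L_x=-\text{ad}_x$. For (i), with $x=D$: $\text{ad}_D$ is $g$-symmetric, so the first and third terms cancel. The middle term $g(\pi[y,z],D)$ vanishes because $D$ never appears as a bracket. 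Hence $L_D=0$. For $x\in E\wedge E'$ the same cancellation holds: the first and third terms cancel because $\text{ad}_x$ is $g$-symmetric on $\mathfrak{m}$. On $V$ this follows from boost matrices being symmetric for $g_V$. On $W$ it is Theorem \ref{b_properties}(iii). On $E\wedge E'$ the bracket lands in $\mathfrak{k}$ and is killed by $\pi$. The middle term vanishes since $E\wedge E'$ is never the $\mathfrak{m}$-projection of a bracket of two elements of $\mathfrak{m}$. So $L_x=0$.

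\emph{Parts (iii)--(v).} For these I will not compute from scratch. Instead I use (\ref{NomizuIdentity}) in the form $L_y x=L_x\pi(y)+\pi[x,y]$. Together with (i) and (ii) this gives $L_v D=-[D,v]$ and $L_v(e_i\wedge e'_j)=[e_i\wedge e'_j,v]$. It remains to compute $g(L_v y,z)$ for $y,z\in V+W$ from (\ref{NomizuOperator}), block by block:
\begin{itemize}
\item The $V$--$V$ block gives zero.
\item The $D$- and $E\wedge E'$-rows are obtained from skewness of $L_v$: Nomizu operators of Killing fields are $g$-skew, since (\ref{NomizuOperator}) is antisymmetric in $y,z$. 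This produces the terms $-D\otimes e_i^*$ and $\sum_j(e_i\wedge e'_j)\otimes {e'_j}^*$. Here I must track the sign coming from $g_V=-\langle\cdot,\cdot\rangle$ on $E'$.
\item The $W$--$W$ block is $-\frac12 g(\pi[s,s'],v)$. Via Theorem \ref{b_properties}(iv) and $g=-b$ on $W$, this becomes $\mp\frac12\omega v$, with sign $-$ on $E$ and $+$ on $E'$ because of the sign flip of $g_V$.
\item The $V\to V$ and $V\to W$ pieces constitute the $-\text{ad}_v$ term.
\end{itemize}
Part (v) is handled the same way. The $D$- and $E\wedge E'$-components come from $\text{ad}_D s=\frac12 s$ and $\text{ad}_{e_ie'_j}$ together with the isomorphism $\mathfrak{m}\simeq\mathfrak{m}^*$. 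The $W\to V$ part comes from (iv) of Theorem \ref{b_properties}. The $W\to W$ part is $-\frac12\text{ad}_s|_W$.

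The main obstacle is sign and normalisation bookkeeping rather than any conceptual step. This includes the factor $-\frac12$ in $\text{ad}_{u\wedge v}=-\frac12 uv$, the sign flip of $g_V$ on $E'$, the choice of $b$ negative definite, and identifying $(e_ie'_j)s$ with $-2\,\text{ad}_{e_i\wedge e'_j}s$. I would fix these by checking each formula on a single pair of basis vectors and then extending by $K$-equivariance.
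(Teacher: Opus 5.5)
Your strategy is sound and is essentially the paper's: evaluate \eqref{NomizuOperator} block by block using the bracket table and Theorem~\ref{b_properties}. The difference is only one of economy. For (i)--(ii) you use the $g$-symmetry of $\pi\circ\text{ad}_x$ for $x\in\mathbb{R}D+E\wedge E'$ and the $g$-skewness of $\text{ad}_x$ for $x\in\mathfrak{k}$, instead of index computations. For (iii)--(v) you get the $\mathbb{R}D$- and $E\wedge E'$-columns from \eqref{NomizuIdentity} and the corresponding rows from the $g$-skewness of $L_x$, whereas the paper computes the pairs $(\mathbb{R}D,E)$ and $(E,\mathbb{R}D)$, etc., separately. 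That economy is legitimate. However, the statement is pure bookkeeping, and several of your intermediate claims are wrong as written; they contradict the formulas you say they produce.

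\textbf{The sign of $L_vD$.} \eqref{NomizuIdentity} with $L_D=0$ gives $L_vD=L_Dv+\pi[D,v]=[D,v]=v$, not $-[D,v]$; compare the paper's $g(L_{e_i}D,e_k)=\delta_{ik}$. Uniformly, $L_vx=[x,v]$ for $x\in\mathbb{R}D+E\wedge E'$, which you did get right for $E\wedge E'$. With your sign, skewness would yield $+D\otimes e_i^*$ instead of the $-D\otimes e_i^*$ you state.

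\textbf{Where $-\text{ad}_v$ sits.} The term $-\text{ad}_v$ is not made of ``$V\to V$ and $V\to W$ pieces''. Since $[V,V]=[V,W]=0$, $\text{ad}_v$ kills $V+W$, and $-\text{ad}_v$ is precisely the $(\mathbb{R}D+E\wedge E')\to V$ part from the first point. The $V\to V$, $V\to W$ and $W\to V$ blocks of $L_v$ all vanish, because every term of \eqref{NomizuOperator} then involves a bracket in $[V,V]$ or $[V,W]$; you should state this.

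Also, in the paper's convention the $W$-block is $g(L_vs,s')=+\frac12 g([s,s'],v)$. Your $-\frac12$ is the transposed entry $g(s,L_vs')$. Converting that to $\mp\frac12\omega v$ additionally requires that $\omega v$ is $g$-skew on $W$, which follows from Theorem~\ref{b_properties}(iv) and skewness of $\Pi$.

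\textbf{The blocks in (v).} $\text{ad}_s|_W$ takes values in $V$ because $[W,W]\subset V$. So $-\frac12\text{ad}_s|_W$ is the $W\to V$ block, read off directly without Theorem~\ref{b_properties}, and the $W\to W$ block of $L_s$ is zero. Theorem~\ref{b_properties}(iv) is what produces the $V\to W$ block, i.e.\ the last two sums in (v):
\[
g(L_se_i,s')=\tfrac12\langle[s,s'],e_i\rangle=-\tfrac12 g(\omega e_is,s'),\qquad g(L_se'_j,s')=-\tfrac12\langle[s,s'],e'_j\rangle=\tfrac12 g(\omega e'_js,s').
\]
Your sketch attributes these pieces to the wrong blocks.

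With these three corrections your argument proves the statement.
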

    \noindent\textit{Proof:} 
    To compute $g(L_{x} y, z)$ we will use the formula (\ref{NomizuOperator}). Due to linearity it is enough to consider only
    the basis elements of $\mathfrak{m}$. Moreover, the form of the formula for $g(L_{x} y, z)$ essentially depends only
    on what irreducible $\text{Ad}_{K}$-submodules of $\mathfrak{m}$ the elements $y, z$ belong to. Further when we say that the pair of
    irreducible submodules $(F_{1}, F_{2})$, $F_{1}, F_{2} \subset \mathfrak{m}$ is considering it means that we consider
    the case $y \in F_{1}$, $z \in F_{2}$. Also note that if for computation of some Nomizu operator $L_{x}$ we don't consider
    some pair $(F_{1}, F_{2})$ then it means that the corresponding values $g(L_{x} y, z)$ vanish. 

    To compute $L_{D}$ we need to consider only the pairs $(E, E)$, $(E', E')$, $(W, W)$. Say, for the pair $(E, E)$ the following
    quality holds 
    \begin{equation*}
        g(L_{D} e_{i}, e_{k}) = - \frac{1}{2} \big( g(e_{i},e_{k}) - g(e_{k}, e_{i}) \big) = 0.
    \end{equation*}
    For the pair $(E', E')$ and $(W, W)$ the computations are similar, so we obtain $L_{D} = 0$.

    Next, consider the basis element $x = e_{i} \wedge e'_{j} \in E \wedge E'$. For this case we need to consider the pairs
    $(E, E')$, $(E', E)$, $(W, W)$. For $(E, E')$ we have
    \begin{equation*}
        g(L_{e_{i} \wedge e'_{j}} e_{k}, e'_{s}) = -\frac{1}{2} \Big( g(-\delta_{ki}e'_{j},e'_{s}) + g(\delta_{js}e_{i}, e_{k}) \Big) = 0.
    \end{equation*}
    For $(E', E)$ similar computation gives zero as well. For $(W, W)$ we have
    \begin{equation*} 
        g(L_{e_{i} \wedge e'_{j}} s, s') = -\frac{1}{2} \Big( g([e_{i} \wedge e'_{j}, s], s')-g(s, [e_{i} \wedge e'_{j}, s']) \Big) = 0.
    \end{equation*}
    Therefore, we obtain the statement (i).

    To show (ii) we take the basis bivector $x = e_{i} \wedge e_{k} \in E \wedge E$. For this case we need to consider 
    the pairs $(E, E)$, $(E \wedge E', E \wedge E')$, $(W, W)$. For $(E, E)$ we have
    \begin{equation*}
        g(L_{e_{k} \wedge e_{l}} e_{i}, e_{r}) = 
            - \frac{1}{2} \Big( g(\delta_{li}e_{k}, e_{r}) + g(\delta_{rk}e_{l}, e_{i}) \Big) =
            - \frac{1}{2} (\delta_{li}\delta_{kr} + \delta_{rk}\delta_{li}) = - \delta_{li}\delta_{kr},
    \end{equation*}
    so $L_{e_{k} \wedge e_{l}} e_{i} = - \delta_{li} e_{k} = -\text{ad}_{e_{k}\wedge e_{l}}e_{i}$.
    For $(E \wedge E', E \wedge E')$ we have
    \begin{equation*}
        g(L_{e_{k} \wedge e_{l}} e_{i} \wedge e'_{j}, e_{r} \wedge e'_{s}) =
            - \frac{1}{2} \big( g(\delta_{li} e_{k} \wedge e'_{j} - \delta_{ik} e_{l} \wedge e'_{j}, e_{r} \wedge e'_{s})
            + g(\delta_{rk} e_{l} \wedge e'_{s} - \delta_{lr} e_{k} \wedge e'_{s}, e_{i} \wedge e'_{j}) \big)
    \end{equation*}
    \begin{equation*}
            = - \frac{1}{2} ( \delta_{li}\delta_{kr}\delta_{js} - \delta_{ik}\delta_{lr}\delta_{js}
                + \delta_{rk}\delta_{li}\delta_{js} - \delta_{lr}\delta_{ki}\delta_{js} ) 
            = \big( \delta_{ki}\delta_{lr} - \delta_{kr}\delta_{li} \big) \delta_{js}, 
    \end{equation*}
    which leads to $L_{e_{k} \wedge e_{l}} e_{i} \wedge e'_{j} = - (\delta_{li} e_{kj} - \delta_{ki} e_{lj}) = - \text{ad}_{e_{k} \wedge e_{l}} e_{i} \wedge e'_{j}$.
    Finally, for $(W, W)$ we have
    \begin{equation*}
        g(L_{e_{k} \wedge e_{l}} s, s') = -\frac{1}{2} \Big( g([e_{k}\wedge e_{l},s],s') - g(s,[e_{k}\wedge e_{l},s']) \Big)
            = -g([e_{k} \wedge e_{l}, s], s'),
    \end{equation*}
    so $L_{e_{k} \wedge e_{l}} s = - \text{ad}_{e_{k} \wedge e_{l}} s$. Therefore, we obtain that $L_{e_{k} \wedge e_{l}} = - \text{ad}_{e_{k} \wedge e_{l}}$.
    For the basis bivectors of the space $E' \wedge E'$ all computations are similar, so we obtain the statement (ii).

    Now take the basis vector $x = e_{i} \in E$. Non-zero computations are given by only the following pairs:
    $(\mathbb{R}D, E)$, $(E, \mathbb{R}D)$, $(E \wedge E', E')$, $(E', E \wedge E')$, $(W, W)$.
    Below is the corresponding computations
    \begin{equation*}
        g(L_{e_{i}} D, e_{k}) = -\frac{1}{2} \Big( g([e_{i}, D], e_{k}) - g([D, e_{k}], e_{i}) \Big) = \delta_{ik},
    \end{equation*}
    \begin{equation*}
        g(L_{e_{i}} e_{k}, D) = -\frac{1}{2} \Big( -g([e_{k}, D], e_{i}) + g([D, e_{i}], e_{k}) \Big) = -\delta_{ik}, 
    \end{equation*}
    \begin{equation*}
        g(L_{e_{k}} e_{i} \wedge e'_{j}, e'_{s}) = -\frac{1}{2} \Big( g(\delta_{ki}e'_{j}, e'_{s}) - g(-\delta_{js}e_{i}, e_{k})\Big)
            = -\delta_{ki}\delta_{js},
    \end{equation*}
    \begin{equation*}
        g(L_{e_{i}}e'_{j}, e_{r} \wedge e'_{s}) = -\frac{1}{2} \big( -g(\delta_{js}e_{r}, e_{i}) + g(-\delta_{ri}e'_{s}, e'_{j}) \big) 
            =  \delta_{js}\delta_{ri},
    \end{equation*}
    \begin{equation*}
        g(L_{e_{i}}s, s') = \frac{1}{2} \Big( \langle [s, s'], e_{i} \rangle \Big) = \frac{1}{2} \Big( b(\omega e_{i} s, s') \Big) 
            = g \Big( - \frac{1}{2}\omega e_{i} s, s' \Big).
    \end{equation*}
    These imply the statement (iii), but (iv) can be proved by analogous computations.

    To show (v) we need to consider the following pairs: $(\mathbb{R}D, W)$, $(W, \mathbb{R}D)$,
    $(W, E)$, $(E, W)$, $(W, E')$, $(E', W)$, $(E \wedge E', W)$, $(W, E \wedge E')$. Also it is worth note that
    $\omega^{2} = (-1)^{\frac{p(p+1)}{2}} = 1$ due to $p \equiv 3 \pmod{4}$. All necessary computations are given below:
    \begin{equation*}
        g(L_{s}D, s') = -\frac{1}{2} \Big( g([s, D], s') - g([D, s'], s) \Big) = \frac{1}{2}g(s,s'),
    \end{equation*}
    \begin{equation*}
        g(L_{s}s', D) = -\frac{1}{2} \Big( -g([s', D], s) + g([D, s], s']) \Big) = - \frac{1}{2} g(s, s'),
    \end{equation*}
    \begin{equation*}
      g(L_{s} s', e_{i}) = -\frac{1}{2} \Big( g([s, s'], e_{i}) \Big) = g\Big( -\frac{1}{2} [s, s'], e_{i} \Big),
    \end{equation*}
    \begin{equation*}
        g(L_{s} e_{i}, s') = \frac{1}{2}g([s, s'], e_{i}) = \frac{1}{2} \langle [s, s'], e_{i} \rangle
            = \frac{1}{2}b(\omega e_{i} s, s') = -\frac{1}{2}g(\omega e_{i} s, s'),
    \end{equation*}
    \begin{equation*}
        g(L_{s} s', e'_{j}) = -\frac{1}{2} \Big( g([s, s'], e'_{j}) \Big) = g\Big( -\frac{1}{2} [s, s'] , e'_{j} \Big),
    \end{equation*}
    \begin{equation*}
        g(L_{s}e'_{j}, s') = \frac{1}{2}g([s, s'], e'_{j}) = -\frac{1}{2} \langle [s, s'], e'_{j} \rangle 
            = -\frac{1}{2}b(\omega e'_{j} s, s') = \frac{1}{2}g(\omega e'_{j} s, s'),
    \end{equation*}
    \begin{equation*}
        g(L_{s} e_{i} \wedge e'_{j}, s') = - \frac{1}{4} \Big( g(e_{i}e'_{j}s, s') + g(s, e_{i}e'_{j} s') \Big)
            = \frac{1}{4} \Big( b(e_{i}e'_{j}s, s') + b(s, e_{i}e'_{j} s') \Big)
    \end{equation*}
    \begin{equation*}
            = \frac{1}{4} \Big( b(e_{i}e'_{j}s, s') + b(e_{i}e'_{j} s, s') \Big) = \frac{1}{2} \Big( b(e_{i}e'_{j}s, s') = - g([s, e_{i} \wedge e'_{j}], s'),
    \end{equation*}
    \begin{equation*}
        g(L_{s}s', e_{i} \wedge e'_{j}) = -\frac{1}{2} \Big( -g([s', e_{i} \wedge e'_{j}], s) + g([e_{i} \wedge e'_{j}, s], s') \Big) = 
    \end{equation*}
    \begin{equation*}
            = \frac{1}{4} \Big( g(e_{i}e'_{j}s, s') + g(s, e_{i}e'_{j}s') \Big) = \frac{1}{2} g(e_{i}e'_{j}s, s').
    \end{equation*}
    \hfill$\Box$

    \begin{Remark}
        Theorem \ref{NomizuOperatorsTheorem} is consistent with \cite[Lemma 10]{C} for $p = 3$. Note that for $p > 3$
        peuso-Riemannian metric is considered in \cite{C} which differs from what is considered in the present paper.
    \end{Remark}

    \subsection{Computation of Ricci curvature}
    Using explicit expressions of Nomizu operators we are able to compute the Ricci curvature of the metric $g_{M}$.
    This computation will show that the metric $g_{M}$ is Einstein.
    \begin{Theorem}
        Noncompact Riemannian homogeneous space $(M, g_{M})$ is Einstein.
        Its dimension is equal to $(p+1)(q+1) + N$ and $\text{\emph{rk} } M = 1 + \min(p, q)$.
    \end{Theorem}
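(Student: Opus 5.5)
The plan is a direct computation of $\text{Ric}(x,x)$ for $x$ in each irreducible $\text{Ad}_{K}$-submodule of $\mathfrak{m}$. I will use the Nomizu operators of Theorem \ref{NomizuOperatorsTheorem} together with the curvature formula (\ref{RiemannTensorFormula}). Since $\text{Ric}$ is $\text{Ad}_{K}$-invariant, and the summands $\mathbb{R}D$, $E\wedge E'$, $E$, $E'$, $W$ are pairwise inequivalent for $q>1$, $\text{Ric}$ is block diagonal. By Schur's lemma it is a scalar on each of $\mathbb{R}D$, $E\wedge E'$, $E$ and $E'$, since these are absolutely irreducible orthogonal modules. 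On $W$ I will use the relevant part of Theorem \ref{b_properties}: $W$ is irreducible under $\mathfrak{k}+E\wedge E'$, so it suffices to compute one diagonal value once I check that the block is a multiple of $g$. For $q=1$ (and $q=0$) the extra off-diagonal terms $\text{Ric}(D,e'_1)$ and $\text{Ric}(e_i\wedge e'_1,e_k)$ are not excluded by Schur, so I will compute them directly and check that they vanish.

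For the computation itself I will use the standard expression
\begin{equation*}
\text{Ric}(x,y)=\operatorname{tr}\big(z\mapsto R(z,x)y\big),
\end{equation*}
which with (\ref{RiemannTensorFormula}) becomes a sum over an orthonormal basis $\{z_a\}$ of $\mathfrak{m}$ of terms $g([L_{z_a},L_x]y+L_{[z_a,x]}y,\,z_a)$. Here $L_{[z_a,x]}$ has to be expanded with $[z_a,x]\in\mathfrak{g}$ split into its $\mathfrak{k}$- and $\mathfrak{m}$-parts, using (i) and (ii) of Theorem \ref{NomizuOperatorsTheorem}. Concretely, I will compute $\text{Ric}(D,D)$ first, since $L_D=0$ and $\text{ad}_D$ is diagonal; this should give $-(n+N/4)$ up to the normalization conventions and fixes the Einstein constant $\lambda$. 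Then I will check that $\text{Ric}(e_1,e_1)$, $\text{Ric}(e'_1,e'_1)$, $\text{Ric}(e_1\wedge e'_1,e_1\wedge e'_1)$ and $\text{Ric}(s,s)$ all equal $\lambda$ times the norm.

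The required Clifford traces are $\operatorname{tr}_W(\omega e_i\,\omega e_k)$ and $\operatorname{tr}_W(e_ie'_j\,e_ke'_l)$, together with $\sum_i(\omega e_i)^2$ and the analogous sums. These reduce to $\omega^2=1$, the fact that $\omega$ commutes with even elements and anticommutes or commutes with vectors as dictated by $p$ odd, and the relations $e_i^2=-1$, ${e'_j}^2=+1$. Theorem \ref{b_properties}(iv) converts the brackets $[s,s']$ into these operators.

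The main obstacle will be the $W$-block. There the terms $\text{ad}_s|_W$ and $\frac12\omega e_i s$ produce quartic expressions in spinors, for example $\sum_i \|\pi[s,\omega e_i s]\|^2$, which are not obviously proportional to $g(s,s)^2$. I will first try to express these via the moment-map identity $\sum_{v}g([s,s'],v)\,v$ with $v$ running over an orthonormal basis, and then use equivariance of $\Pi$ to show that the relevant quadratic map $W\to\text{End}(W)$ is a multiple of the identity. Failing that, I will evaluate the block on a single convenient spinor and invoke irreducibility of $W$ under $\mathfrak{so}(V)$.

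For the remaining claims, the dimension count is immediate: $\dim\mathfrak{m}=1+pq+p+q+N=(p+1)(q+1)+N$. For the rank, $M$ is realized as a solvable group through an Iwasawa-type decomposition of $\mathfrak{so}(V)+\mathbb{R}D$. The maximal abelian split part consists of $\mathbb{R}D$ plus a Cartan subspace of $E\wedge E'$ of dimension $\min(p,q)$, which gives $\text{rk}\,M=1+\min(p,q)$. Negative scalar curvature then follows from $\lambda<0$.
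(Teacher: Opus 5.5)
Your skeleton matches the paper's:
\begin{itemize}
\item the Ricci form evaluated block by block from the Nomizu operators;
\item Schur's lemma together with inequivalence of the $\mathrm{Ad}_K$-summands;
\item a direct check of $\mathrm{Ric}(D,e'_1)$ and $\mathrm{Ric}(e_i,e_j\wedge e'_1)$ when $q=1$;
\item the dimension count, and $\mathfrak{a}=\mathbb{R}D+{}$(a maximal abelian subspace of $E\wedge E'$) for the rank.
\end{itemize}
The gap is the $W$-block, where the tools you propose cannot close the argument.

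Your fallback (evaluate on one spinor, then invoke irreducibility of $W$ under $\mathfrak{so}(V)$) is unjustified. The only symmetry available at $o$ is $\mathrm{Ad}_K$, so $\mathrm{Ric}|_W$ is only $K$-invariant, and nothing forces the Ricci endomorphism of $W$ to commute with $E\wedge E'$. At the level of bilinear forms, Schur would even point the wrong way: for $q\ge 1$ the target $\lambda g|_W=-\lambda b$ is not $\mathfrak{so}(V)$-invariant, because $E\wedge E'$ acts by nonzero $b$-symmetric endomorphisms (Theorem~\ref{b_properties}(iii)). For the same reason, ``equivariance of $\Pi$'' cannot by itself force a scalar: the operators involved are built from $g|_W=-b$, which has only $K$-symmetry.

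Nor can you fall back on $K$-irreducibility of $W$, which fails in general. For $q\equiv 0\pmod 4$, $q>0$, the element $e'_1\cdots e'_q\in\mathrm{Cl}^0(V)$ commutes with $K$, squares to $1$, and anticommutes with every $e_ie'_j$. Hence it splits $W$ into two nonzero $K$-submodules, and one diagonal value no longer determines the block.

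Your diagnosis of the obstacle is also mistaken. $\mathrm{Ric}(s,s)$ is bilinear in $s$: in your own formula every term, e.g.\ $g(L_{z_a}L_ss,z_a)$ or $g(L_{[z_a,s]}s,z_a)$, is quadratic in $s$. So quartic expressions such as $\sum_i\|\pi[s,\omega e_is]\|^2$ never occur.

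The missing step is what the paper does: compute $\mathrm{Ric}(s,s)$ for an arbitrary unit $s\in W$, observe that the value is independent of $s$, and polarize. No irreducibility is needed; $W$ is $\mathrm{Ric}$-orthogonal to the other summands since $-1\in K$ acts by $-\mathrm{Id}$ on $W$ and trivially on the rest. With the paper's formula $\mathrm{Ric}(x,x)=\operatorname{tr}(-R_x^2+R_{L_xx}-R_x\mathrm{ad}^{\mathfrak{k}}_x)$ this is short.
\begin{itemize}
\item $\mathrm{ad}^{\mathfrak{k}}_s=0$ on $\mathfrak{m}$.
\item $L_ss=-\frac12D+\frac12\sum_{i,j}g(e_ie'_js,s)\,e_i\wedge e'_j$, and each $\pi\circ\mathrm{ad}_{e_i\wedge e'_j}$ is traceless on $\mathfrak{m}$, so $\operatorname{tr}R_{L_ss}=-\frac n2-\frac N4$.
\item $R_s$ kills $\mathbb{R}D+E\wedge E'$, sends $e_i\mapsto-\frac12\omega e_is$ and $e'_j\mapsto\frac12\omega e'_js$, and on $W$ equals $\frac12[s,\cdot\,]$ modulo $\mathbb{R}D+E\wedge E'$.
\end{itemize}
Expand $[s,s_k]$ in the basis of $V$ (your moment-map step) and apply Theorem~\ref{b_properties}(iv). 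Then both $\operatorname{tr}R_s^2|_V$ and $\operatorname{tr}R_s^2|_W$ equal $\frac14\sum_v\|\omega v\,s\|^2$, with $v$ running over $e_1,\dots,e_p,e'_1,\dots,e'_q$. Each $\omega e_i$ and $\omega e'_j$ is a $g$-orthogonal complex structure on $W$ ($b$-skew by (iv), with square $-1$), so this sum is $\frac n4\,g(s,s)$. Hence $\operatorname{tr}R_s^2=\frac n2$ and $\mathrm{Ric}(s,s)=-n-\frac N4=\lambda$ for every unit $s$.

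This uses $(\omega e'_j)^2=-1$. That holds because $\omega=e_1\cdots e_p$ is the volume element of $E$, not of $V$: it commutes with the $e_i$ but anticommutes with the $e'_j$, and hence with $e_ie'_j$. Your claim that $\omega$ commutes with all even elements is therefore false. It would give $(\omega e'_j)^2=+1$ and break exactly this computation.
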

    \noindent\textit{Proof:} From the formula (\ref{RiemannTensorFormula}) one can derive the following expression
    for the Ricci curvature at the point $o \in M$ using Nomizu operators
    \begin{equation}\label{RicciQuadraticForm}
        \text{Ric}(x, x) = \text{tr}\big( - R_{x}^{2} + R_{L_{x}x} - R_{x}\text{ad}_{x}^{\mathfrak{k}} \big)
    \end{equation}
    where $\text{ad}_{x}^{\mathfrak{k}} := (1 - \pi) \circ \text{ad}_{x}$. In this formula and in all formulas below we consider
    the trace which is taken over subspace $\mathfrak{m} \subset \mathfrak{g}(\Pi)$.

    Firstly, we compute the term $L_{x}x$. According to the formula (\ref{NomizuOperator}) we have the equality 
    \begin{equation*}
        g(L_{x}x, y) = g(\pi[x,y], x).
    \end{equation*}
    The right handside of this equality is not equal to zero only in the following cases: 
    $(x, y) \in (E, \mathbb{R}D)$, $(E', \mathbb{R}D)$, $(W, \mathbb{R}D)$, $(W, E \wedge E')$.
    For simplicity we will always consider $s \in W$ with $g(s, s) = 1$ unless otherwise stated.
    Consider below each case separately
    \begin{equation*}
        g(L_{e_{i}}e_{i}, D) = g(\pi[e_{i}, D], e_{i}) = -1,
    \end{equation*}
    \begin{equation*}
        g(L_{e'_{j}}e'_{j}, D) = g(\pi[e'_{j}, D], e'_{j}) = -1,
    \end{equation*}
    \begin{equation*}
        g(L_{s}s, D) = g(\pi[s, D], s) = -\frac{1}{2},
    \end{equation*}
    \begin{equation*}
        g(L_{s}s, e_{i} \wedge e'_{j}) = g(\pi[s, e_{i} \wedge e'_{j}], s) = \frac{1}{2} g(e_{i}e'_{j}s, s).
    \end{equation*}
    From this we deduce the formulas
    \begin{equation}
        L_{e_{i}}e_{i} = L_{e'_{j}}e'_{j} = - D, \ \ \ 
        L_{s}s = -\frac{1}{2} D + \frac{1}{2} \sum\limits_{i,j=1}^{p,q} g(e_{i}e'_{j}s, s) e_{i} \wedge e'_{j}. 
    \end{equation}
    Using the observation that $R_{x} = L_{x} \circ \pi + \pi \circ \text{ad}_{x}$ and the previous results $L_{D} = 0, L_{e_{i} \wedge e'_{j}} = 0$
    we have that
    \begin{equation}
        R_{D} = \pi \circ \text{ad}_{D} = \text{ad}_{D}, \ \ \ R_{e_{i} \wedge e'_{j}} = \pi \circ \text{ad}_{e_{i} \wedge e'_{j}},
    \end{equation}
    which implies the formulas
    \begin{equation}
        R_{L_{e_{i}}e_{i}} = R_{L_{e'_{j}}e'_{j}} = - R_{D} = - \text{ad}_{D},
    \end{equation}
    \begin{equation*}
        R_{L_{s}s} = -\frac{1}{2} R_{D} + \sum\limits_{i,j=1}^{p,q} g(e_{i}e'_{j}s, s)R_{e_{i}\wedge e'_{j}}  
            = - \frac{1}{2} \text{ad}_{D} + \frac{1}{2} \sum\limits_{i,j=1}^{p,q} g(e_{i}e'_{j}s, s) \pi \circ \text{ad}_{e_{i}\wedge e'_{j}}. 
    \end{equation*}
    Now the trace of the operators $R_{L_{e_{i}}e_{i}}$ and $R_{L_{e'_{j}}e'_{j}}$ can be easily computed
    \begin{equation}\label{TrRLEE}
        \text{tr } R_{L_{e_{i}}e_{i}} = \text{tr } R_{L_{e'_{j}}e'_{j}} = - n - \frac{N}{2}.
    \end{equation}
    To compute the trace of the operator $R_{L_{s}s}$ note that $\text{ad}_{e_{i} \wedge e'_{j}}$ is skew-symmetric endomorphism
    of $V = E \oplus E'$ with respect to the original scalar product $\langle -, - \rangle$, so $\text{tr }\text{ad}_{e_{i} \wedge e'_{j}}|_{V} = 0$.
    To see that $\text{tr } \text{ad}_{e_{i} \wedge e'_{j}}|_{W} = 0$ holds as well we consider the auxilliary bilinear form $h_{kl}$
    defined for any indicies $1 \leq k \leq p$, $1 \leq l \leq q$, as follows
    \begin{equation}\label{AuxilliaryBilinearForm}
        h_{kl}(s, s') := b(e_{k} e'_{l} s, s').
    \end{equation}
    Due to the properties (ii) and (iii) from Theorem \ref{b_properties} this bilinear form is symmetric and nondegenerate.
    Let $k$ be any index from $\{1, .., p\}$ different from $i$, i. e. $k \in \{1, .., p\} \setminus \{ i \}$, and $l$ is equal to $j$.
    For this case we have the following equality
    \begin{equation*}
        h_{kj}(\text{ad}_{e_{i} \wedge e'_{j}} s, s) = \frac{1}{2} b(e_{k}e_{i} s, s) = - \frac{1}{2} b(s, e_{k}e_{i} s)
    \end{equation*}
    \begin{equation*}
        = - \frac{1}{2} b(e_{k}e_{i} s, s) = -h_{kj}(\text{ad}_{e_{i} \wedge e'_{j}} s, s),
    \end{equation*}
    which implies $h_{kj}(\text{ad}_{e_{i} \wedge e'_{j}} s, s) = 0$ for any $s \in W$. Consequently, the computation of the trace
    of $\text{ad}_{e_{i} \wedge e'_{j}}$ in any $h_{kj}$-orthonormal basis vanishes, so we indeed have $\text{tr } \text{ad}_{e_{i} \wedge e'_{j}}|_{W} = 0$.
    So we have the equality 
    \begin{equation}
        \text{tr } R_{L_{s}s} = - \frac{n}{2} - \frac{N}{4}. 
    \end{equation}

    Now consider the term $R_{x}\text{ad}_{x}^{\mathfrak{k}}$. It is not equal to zero if and only if $x \in E \wedge E'$
    because $\text{ad}_{x}^{\mathfrak{k}}$ vanishes on other irreducible subspaces of $\mathfrak{m}$.
    More precisely, $\text{ad}_{x}^{\mathfrak{k}} y$ vanishes for any $x, y \in (E \wedge E')^{\perp}$.
    On the other hand, for $x = e_{i} \wedge e'_{j}$ we have
    \begin{equation*}
        R_{e_{i} \wedge e'_{j}}\text{ad}_{e_{i} \wedge e'_{j}}^{\mathfrak{k}} e_{k} \wedge e'_{l} 
            = \pi \circ \text{ad}_{e_{i} \wedge e'_{j}} (- \delta_{ik} e'_{j} \wedge e'_{l} +\delta_{jl}e_{i}\wedge e_{k})
   \end{equation*}
   \begin{equation*}
            = \delta_{ik} [e'_{j} \wedge e'_{l}, e_{i} \wedge e'_{j}] - \delta_{jl} [e_{i} \wedge e_{k}, e_{i} \wedge e'_{j}]
    \end{equation*}
    \begin{equation*}
            = \delta_{ik} e_{i} \wedge e'_{l} + \delta_{jl} e_{k} \wedge e'_{j} - 2 \delta_{ik} \delta_{jl} e_{i} \wedge e'_{j}.
    \end{equation*}
    From this we obtain that $g(R_{e_{i} \wedge e'_{j}}\text{ad}_{e_{i} \wedge e'_{j}}^{\mathfrak{k}}  e_{k} \wedge e'_{l}, e_{k} \wedge e'_{l})
    = \delta_{ik}+\delta_{jl} - 2 \delta_{ik}\delta_{jl}$ which implies
    \begin{equation}\label{MixedTrEE'}
        \text{tr } R_{e_{i} \wedge e'_{j}}\text{ad}_{e_{i} \wedge e'_{j}}^{\mathfrak{k}} = q + p - 2 = n-2. 
    \end{equation}
Therefore, at the moment the following formulas hold
    \begin{equation}\label{Ric(D, D)}
        \text{Ric}(D,D) = -\text{tr }R_{D}^{2} = -\text{tr} (\text{ad}_{D})^{2} = -n - \frac{N}{4},
    \end{equation}
    \begin{equation}\label{Ric_E_E'}
        \text{Ric}(e_{i}, e_{i}) = -\text{tr } R_{e_{i}}^{2} - n - \frac{N}{2}, \ \ \
        \text{Ric}(e'_{j}, e'_{j}) = -\text{tr } R_{e'_{j}}^{2} - n - \frac{N}{2},
    \end{equation}
    \begin{equation}
        \text{Ric}(e_{i} \wedge e'_{j}, e_{i} \wedge e'_{j}) = - \text{tr } R_{e_{i} \wedge e'_{j}}^{2} - n + 2,
    \end{equation}
    \begin{equation}\label{Ric_W}
        \text{Ric}(s, s) = - \text{tr } R_{s}^{2} - \frac{n}{2} - \frac{N}{4}.
    \end{equation}

    From the identity $R_{e_{i} \wedge e'_{j}} = \pi \circ \text{ad}_{e_{i} \wedge e'_{j}}$ we see that the trace $\text{tr } R_{e_{i} \wedge e'_{j}}^{2}$
    can be computed like below
    \begin{equation}\label{TrREE'2}
        \text{tr } R_{e_{i} \wedge e'_{j}}^{2} = \sum\limits_{k=1}^{p} g\big( (\text{ad}_{e_{i} \wedge e'_{j}})^{2} e_{k}, e_{k} \big)
                 + \sum\limits_{l=1}^{q} g\big( (\text{ad}_{e_{i} \wedge e'_{j}})^{2} e'_{l},e'_{l} \big)
    \end{equation}
    \begin{equation*}
                + \sum\limits_{r=1}^{N} g((\text{ad}_{e_{i} \wedge e'_{j}})^{2} s_{r}, s_{r})
            = \sum\limits_{k=1}^{p} g\big( \delta_{ik} e_{i}, e_{k} \big) + \sum\limits_{l=1}^{q} g\big( \delta_{jl} e'_{j}, e'_{l} \big)
    \end{equation*}
    \begin{equation*}
                + \frac{1}{4}\sum\limits_{r=1}^{N} g((e_{i}e'_{j})^{2} s_{r}, s_{r})
            = \sum\limits_{k=1}^{p} \delta_{ik} + \sum\limits_{l=1}^{q} \delta_{jl} + \frac{1}{4}\sum\limits_{r=1}^{N} g(s_{r}, s_{r})
            =  2 + \frac{N}{4},
    \end{equation*}
    so we have 
    \begin{equation*}
        \text{Ric}(e_{i} \wedge e'_{j}, e_{i} \wedge e'_{j}) = -\text{tr} (\pi \circ \text{ad}_{e_{i} \wedge e'_{j}})^{2} - n + 2 = - n - \frac{N}{4}.
    \end{equation*}
    Next, we compute $\text{tr} R_{e_{i}}^{2}$ and $\text{tr} R_{e'_{j}}^{2}$. Firstly, we note that  
    \begin{equation*}
        R_{e_{i}} = L_{e_{i}} \circ \pi + \pi\circ\text{ad}_{e_{i}} = -D \otimes e_{i}^{*} + 
            \sum\limits_{j=1}^{q} (e_{i} \wedge e'_{j}) \otimes {e'_{j}}^{*} - \frac{1}{2}\omega e_{i},
    \end{equation*}
    \begin{equation*}
        R_{e'_{j}} = L_{e'_{j}} \circ \pi + \pi\circ\text{ad}_{e'_{j}} = -D \otimes {e'_{j}}^{*} 
            + \sum\limits_{i=1}^{p} ( e_{i} \wedge e'_{j} ) \otimes e_{k}^{*} + \frac{1}{2}\omega e'_{j}.
    \end{equation*}
    From these formulas we see that
    \begin{equation}\label{BlockRE}
        R_{e_{i}}: \mathfrak{k} \rightarrow 0, \ \ E \rightarrow \mathbb{R}D, \ \ E'\rightarrow E\wedge E', \ \ W \rightarrow W,
    \end{equation}
    \begin{equation*}
        R_{e'_{j}}: \mathfrak{k} \rightarrow 0, \ \ E' \rightarrow \mathbb{R}D, \ \ E \rightarrow E \wedge E', \ \ W \rightarrow W.
    \end{equation*}
    So the operators $R_{e_{i}}^{2}$, $R_{e'_{j}}^{2}$ can be considered as endomorphisms of  $W$,
    i. e. $R_{e_{i}}^{2}, R_{e'_{j}}^{2}: W \rightarrow W$. Therefore, we have
    \begin{equation}\label{TrRe2}
        \text{tr } R_{e_{i}}^{2} = \sum\limits_{u=1}^{N} g\big( R_{e_{i}}^{2} s_{u}, s_{u} \big)
            = \frac{1}{4} \sum\limits_{u=1}^{N} g\big( \omega e_{i} \omega e_{i} s_{u}, s_{u} \big) = - \frac{N}{4}.
    \end{equation}
    The trace of $R_{e'_{j}}^{2}$ can be computated by the similar computation
    \begin{equation}\label{tr_R_e'}
        \text{tr } R_{e'_{j}}^{2} = \sum\limits_{u=1}^{N} g\big( R_{e'_{i}}^{2} s_{u}, s_{u} \big)
            = \frac{1}{4} \sum\limits_{u=1}^{N} g\big( \omega e'_{i} \omega e'_{i} s_{u}, s_{u} \big) = - \frac{N}{4}.
    \end{equation}
    Substituting these values to the expressions (\ref{Ric_E_E'}) we obtain
    \begin{equation}
        \text{Ric}(e_{i}, e_{i}) = \text{Ric}(e'_{j}, e'_{j}) = \frac{N}{4} - n - \frac{N}{2} = - n - \frac{N}{4}.
    \end{equation}

    Finally, we compute the summand $\text{tr } R_{s}^{2}$ in the formula (\ref{Ric_W}). We have 
    \begin{equation*}
        R_{s} = L_{s} \circ \pi + \pi \circ \text{ad}_{s} = -\frac{1}{2} D \otimes s^{*} 
            + \frac{1}{2} \sum\limits_{i=1}^{p}\sum\limits_{j=1}^{q} e_{i} \wedge e'_{j} \otimes (e_{i}e'_{j}s)^{*}
    \end{equation*}
    \begin{equation*}
            + \frac{1}{2}\text{ad}_{s}|_{W} + \text{ad}_{s}|_{\mathfrak{k}} - \frac{1}{2} \sum\limits_{i=1}^{p} \omega e_{i} s \otimes e_{i}^{*} 
            + \frac{1}{2} \sum\limits_{j=1}^{q} \omega e'_{j} s \otimes {e'_{j}}^{*}.
    \end{equation*}
    From this we see that
    \begin{equation*}
        R_{s}: \mathfrak{k} \rightarrow W, \ \ E \rightarrow W, \ \ E' \rightarrow W, \ \ W \rightarrow \mathbb{R}D+E+E'+E \wedge E'
    \end{equation*}
    which implies that only subspaces $E$, $E'$, $W$ can give a non-zero contribution to the computation of $\text{tr }R_{s}^{2}$.
    More precisely, for any $s \in W$ with $g(s, s) = 1$ we have
    \begin{equation*}
        \text{tr } R_{s}^{2}|_{E} = \sum\limits_{i=1}^{p} g(R_{s}^{2} e_{i}, e_{i}) 
            = - \frac{1}{4} \sum\limits_{i=1}^{p} \langle [s, \omega e_{i} s], e_{i} \rangle
    \end{equation*}
    \begin{equation*}
            = - \frac{1}{4} \sum\limits_{i=1}^{p} b(\omega e_{i} s, \omega e_{i} s)
            = - \frac{1}{4} \sum\limits_{i=1}^{p} b(s, s) = \frac{1}{4} \sum\limits_{i=1}^{p} g( s, s) = \frac{p}{4},
    \end{equation*}
    \begin{equation*}
        \text{tr } R_{s}^{2}|_{E'} = \sum\limits_{j=1}^{q} g(R_{s}^{2} e'_{j}, e'_{j})
            = - \frac{1}{4} \sum\limits_{i=1}^{q} \langle [s, \omega e'_{j} s], e'_{j} \rangle
    \end{equation*}
    \begin{equation*}
            = - \frac{1}{4} \sum\limits_{i=1}^{q} b( \omega e'_{j} s, \omega e'_{j} s)
            = - \frac{1}{4} \sum\limits_{i=1}^{q} b(s, s) = \frac{1}{4} \sum\limits_{i=1}^{q} g( s, s ) = \frac{q}{4},
    \end{equation*}
    \begin{equation*}
        \text{tr } R_{s}^{2}|_{W} = \sum\limits_{k}^{N} g(R_{s}^{2} s_{k}, s_{k})
            = \frac{1}{2} \sum\limits_{k}^{N} g(R_{s}[s, s_{k}], s_{k})
    \end{equation*}
    \begin{equation*}
            = - \frac{1}{4} \sum\limits_{k=1}^{N} \sum\limits_{i=1}^{p} g\Big(\omega e_{i} s \langle [s, s_{k}], e_{i} \rangle, s_{k} \Big)
                - \frac{1}{4} \sum\limits_{k=1}^{N} \sum\limits_{j=1}^{q} g\Big(\omega e'_{j} s \langle [s, s_{k}], e'_{j} \rangle, s_{k} \Big)
    \end{equation*}
    \begin{equation*}
            = - \frac{1}{4} \sum\limits_{k=1}^{N} \sum\limits_{i=1}^{p} g\Big(\omega e_{i} s b(\omega e_{i} s, s_{k}), s_{k} \Big)
                - \frac{1}{4} \sum\limits_{k=1}^{N} \sum\limits_{j=1}^{q} g\Big(\omega e'_{j} s b( \omega e'_{j} s, s_{k}), s_{k} \Big)
    \end{equation*}
    \begin{equation*}
            = \frac{1}{4} \sum\limits_{k=1}^{N} \sum\limits_{i=1}^{p} g\Big(\omega e_{i} s, g(\omega e_{i} s, s_{k}) s_{k} \Big)
                + \frac{1}{4} \sum\limits_{k=1}^{N} \sum\limits_{j=1}^{q} g\Big(\omega e'_{j} s, g( \omega e'_{j} s, s_{k}) s_{k} \Big)
    \end{equation*}
    \begin{equation*}
            = \frac{1}{4} \sum\limits_{i=1}^{p} g\Big(\omega e_{i} s, \omega e_{i} s \Big)
                + \frac{1}{4} \sum\limits_{j=1}^{q} g\Big(\omega e'_{j} s, \omega e'_{j} s \Big)
    \end{equation*}
    \begin{equation*}
            = \frac{1}{4} \sum\limits_{i=1}^{p} g ( s, s ) + \frac{1}{4} \sum\limits_{j=1}^{q} g( s, s ) = \frac{n}{4},
    \end{equation*}
    \begin{equation*}
         \text{tr }R_{s}^{2} = \text{tr } R_{s}^{2}|_{E} + \text{tr } R_{s}^{2}|_{E'} + \text{tr } R_{s}^{2}|_{W}
            = \frac{p}{4} + \frac{q}{4} + \frac{n}{4} = \frac{n}{2}.
    \end{equation*}
    
    Therefore, we obtain that
    \begin{equation}
        \text{Ric}(s, s) = - \text{tr } R_{s}^{2} - \frac{n}{2} - \frac{N}{4} = - \frac{n}{2} - \frac{n}{2} - \frac{N}{4} = - n - \frac{N}{4}.
    \end{equation}

    Finally, we have the equalities
    \begin{equation}\label{Ric_equality}
        \text{Ric}(D, D) = \text{Ric}(e_{i}, e_{i}) = \text{Ric}(e'_{j}, e'_{j})
    \end{equation}
    \begin{equation*}
            = \text{Ric}(e_{i} \wedge e'_{j}, e_{i} \wedge e'_{j}) = \text{Ric}(s, s) = - n - \frac{N}{4} =: \lambda.
    \end{equation*}
    Since the last equality in this sequence is checked for any $s \in W$ we obtain that $\lambda \cdot g$ and $\text{Ric}$
    coincide over $W$. On the other hand, these bilinear forms are $\text{Ad}_{K}$-invariant and the $\text{Ad}_{K}$-modules
    $\mathbb{R}D$, $E$, $E'$, $E \wedge E' \subset \mathfrak{m}$ are irreducible, so we obtain that $\lambda \cdot g$
    and $\text{Ric}$ coincide over each of these irreducible submodules. Moreover, if $q > 1$ then all of them are pairwise
    non-equivalent which necesserily implies that all of them are orthogonal with respect to both $g$ and $\text{Ric}$.
    Therefore, for $q > 1$ we obtain that $\lambda \cdot g$ and $\text{Ric}$ coincide over the whole tangent space $\mathfrak{m}$.

    Additional check is only needed for the case $q = 1$. Namely, we need to show that in this case we have $\text{Ric}(D, e'_{1}) = 0$ and
    $\text{Ric}(e_{i}, e_{j} \wedge e'_{1}) = 0$ for any $1 \leq i, j \leq p$. Using (\ref{RicciQuadraticForm}), (\ref{Ric_equality}),
    (\ref{Ric(D, D)}), (\ref{tr_R_e'}) and taking into account the equalities $\text{ad}_{D+e'_{1}}^{\mathfrak{k}} = 0$
    and $L_{D+e'_{1}}(D+e'_{1}) = L_{e'_{1}}e'_{1} = -D$ we obtain
    \begin{equation*}
        \text{Ric}(D+e'_{1}, D+e'_{1}) = - \text{tr}\bigg( R_{D}^{2} + 2R_{D}R_{e'_{1}} + R_{e'_{1}}^{2} + R_{D}  \bigg)
    \end{equation*}
    \begin{equation*}
        = - \text{tr} \big( \text{ad}_{D}R_{e'_{1}} \big) - 2 \bigg( n + \frac{N}{4} \bigg).
    \end{equation*}
    We have $\text{tr} \big( \text{ad}_{D}R_{e'_{1}} \big) = \frac{1}{4}\sum\limits_{u=1}^{N} g(\omega e'_{1} s_{u}, s_{u}) = 0$
    because $\omega e'_{1}$ acts on $W$ by $g$-skew-symmetric endomorphism due to Theorem \ref{b_properties}.
    Therefore, we obtain $\text{Ric}(D+e'_{1}, D+e'_{1}) = \text{Ric}(D, D) + \text{Ric}(e'_{1}, e'_{1})$ which implies $\text{Ric}(D, e'_{1}) = 0$.

    Similarly, using (\ref{TrRe2}), (\ref{TrREE'2}), (\ref{TrRLEE}), (\ref{BlockRE}), (\ref{MixedTrEE'}) we obtain
    \begin{equation*}
        \text{Ric}(e_{i} + e_{j} \wedge e'_{1}, e_{i} + e_{j} \wedge e'_{1}) =
    \end{equation*}
    \begin{equation*}
            = \text{tr}\big( -R_{e_{i}}^{2} - 2 R_{e_{i}} R_{e_{j} \wedge e'_{1}} - R_{e_{j} \wedge e'_{1}}^{2}
                + R_{L_{e_{i}} (e_{i} + e_{j} \wedge e'_{1})} - R_{e_{i}+e_{j} \wedge e'_{1}} \text{ad}^{\mathfrak{k}}_{e_{j} \wedge e'_{1}}  \big)
    \end{equation*}
    \begin{equation*}
            = -2 \bigg( n + \frac{N}{4} \bigg) - 2 \text{tr}\big( R_{e_{i}} R_{e_{j} \wedge e'_{1}} \big) + \text{tr} \big( R_{L_{e_{i}} e_{j} \wedge e'_{1}} \big)
    \end{equation*}
    From the statement (iii) of Theorem \ref{NomizuOperatorsTheorem} we have $L_{e_{i}} e_{j} \wedge e'_{1} = - \delta_{ij} e'_{1}$,
    so $\text{tr} \big( R_{L_{e_{i}} e_{j} \wedge e'_{1}} \big) = - \delta_{ij} \text{tr } R_{e'_{1}} = 0$. Next, note that for the bilinear form
    (\ref{AuxilliaryBilinearForm}) with $k = i$ and $l = 1$ we have the following equality 
    \begin{equation*}
             h_{i1}(R_{e_{i}} R_{e_{j} \wedge e'_{1}} s, s) = \frac{1}{4} b(e_{i}e'_{1}\omega e_{i}e_{j}e'_{1} s, s)
                 = \frac{1}{4} b(\omega e_{j} s, s)
    \end{equation*}
    \begin{equation*}
                = - \frac{1}{4} b( s, \omega e_{j} s) = - \frac{1}{4} b(\omega e_{j} s, s) = - h_{i1}(R_{e_{i}} R_{e_{j} \wedge e'_{1}} s, s).
    \end{equation*}
    which means that $h_{i1}(R_{e_{i}} R_{e_{j} \wedge e'_{1}} s, s) = 0$ for any $s \in W$. Since $h_{i1}$ is symmetric and
    non-degenerate we obtain that $\text{tr}\big( R_{e_{i}} R_{e_{j} \wedge e'_{1}} \big) = 0$, so we have
    \begin{equation*} 
        \text{Ric}(e_{i} + e_{j} \wedge e'_{1}, e_{i} + e_{j} \wedge e'_{1}) = \text{Ric}(e_{i}, e_{i}) + \text{Ric}(e_{j} \wedge e'_{1}, e_{j} \wedge e'_{1}),
    \end{equation*}
    in particular, $\text{Ric}(e_{i}, e_{j} \wedge e'_{1}) = 0$ for any $1 \leq i, j \leq p$. Consequently, we showed that for
    the case $q = 1$ the bilinear forms $\lambda \cdot g$ and $\text{Ric}$ coincide over the whole tangent space $\mathfrak{m}$.

    Therefore, we proved that the Riemannian metric $g_{M}$ on $M$ is Einstein.
    The dimension of $M$ can be computed as follows
    \begin{equation*}
        \dim M = \dim \mathfrak{m} = \dim \mathbb{R}D + \dim E + \dim E' + \dim E \wedge E' + \dim W
    \end{equation*}
    \begin{equation*}
            = 1 + p + q + pq + N = (p + 1)(q + 1) + N.
    \end{equation*}

    To compute the rank of solvmanifold $M = G(\Pi)/K$ we describe the maximal completely solvable subalgebra
    $\mathfrak{s} \subset \mathfrak{g}(\Pi)$ complementary to $\mathfrak{k}$ (see \cite{A1, A2}). Without loss of generality we can assume
    that $p < q$. For this case we have the decomposition $V = \mathbb{R}^{p, q} = \mathbb{R}^{p, p} \oplus \mathbb{R}^{0, q-p}$.
    Consider the isotropic decomposition $\mathbb{R}^{p, p} = P + Q$ where $P$, $Q$ are maximal isotropic subspaces, $P^{*} \simeq Q$.
    Also denote $\mathbb{R}^{0, q-p}$ by $R$, so we have the decomposition $V = P + Q + R$. Let $(p_{i})$, $i = 1,.., p$,
    be a basis of $P$ and $(q_{i})$, $i = 1,.., p,$ be the dual basis of $Q$ . The subalgebra $\mathfrak{s}$ has the following form
    \begin{equation}
        \mathfrak{s} = \mathfrak{a} + \mathfrak{n} = \bigg( \mathbb{R}D + \text{span}\{ p_{i} \wedge p_{i} \ | \ i=1,..,p \} \bigg)
    \end{equation}
    \begin{equation*}
            + \bigg( \text{span}\{ p_{i} \wedge q_{j} \ | \ 1 \leq i < j \leq p \} + P \wedge P + P \wedge R + V + W \bigg).
    \end{equation*}
    Therefore, we have $\text{rk } M = \dim \mathfrak{a} = 1 + p$.
    \hfill$\Box$


\begin{thebibliography}{99}

\bibitem{A}
    D. V. Alekseevskii,
    Classification of quaternionic spaces with a transitive solvable group of motions,
    Math. USSR-Izv., 9:2 (1975), 297–339.

\bibitem{A1}
    D. V. Alekseevskii,
    Homogeneous Riemannian spaces of negative curvature,
    Math. USSR-Sb., 25:1 (1975), 87–109.

\bibitem{A2}
    D. V. Alekseevskii,
    Conjugacy of polar factorizations of Lie groups,
    Math. USSR-Sb., 13:1 (1971), 12–24.

\bibitem{AK}
    D. V. Alekseevskii, B. N. Kimel'fel'd,
    Structure of homogeneous Riemann spaces with zero Ricci curvature,
    Funct. Anal. Appl., 9:2 (1975), 97–102.

\bibitem{AVL}
    D. V. Alekseevskii, A. M. Vinogradov, V. V. Lychagin,
    Basic ideas and concepts of differential geometry,
    Geometry – 1, Itogi Nauki i Tekhniki. Ser. Sovrem. Probl. Mat. Fund. Napr., 28, VINITI, Moscow, 1988, 5–289.

\bibitem{G}
    V. V. Gorbatsevich,
    On Some Classes of Bases in Finite-Dimensional Lie Algebras,
    Math. Notes, 114:2 (2023), 165–171.

\bibitem{AC}
    D. V. Alekseevsky, V. C\'{o}rtes, 
    Classification of $N$-(Super)-Extended Poincar\'{e} Algebras and Bilinear Invariants of 
    the Spinor Representation of $\text{Spin}(p, q)$,
    Commun. Math. Phys. 183, 477-510 (1997). 

\bibitem{ADF}
  D. Alekseevsky, I. Dotti, C. Ferraris,
  Homogeneous Ricci positive 5-manifolds,
  Pacific J. Math. 175 (1996), no. 1, 1–12.

\bibitem{Ber}
  M. Berger, A Panoramic View of Riemannian Geometry, Springer-Verlag, New York, 2003.
  
\bibitem{BTV}
  J. Berndt, F. Tricerri, L. Vanhecke,
  Generalized Heisenberg Groups and Damek-Ricci Harmonic Spaces,
  Springer-Verlag, Berlin Heidelberg, 1995.

\bibitem{Bes}
  A. L. Besse, 
  Einstein manifolds, 
  Ergebnisse der Mathematik und ihrer Grenzgebiete (3), 10, Springer-Verlag, Berlin, 1987.

\bibitem{B}
  Ch. B\"{o}hm,
  Homogeneous Einstein metrics and simplicial complexes,
  J. Differential Geom. 67 (2004), no. 1, 79–165.

\bibitem{BK}
  Ch. B\"{o}hm, M. M. Kerr,
  Low-dimensional homogeneous Einstein manifolds,
  Trans. Amer. Math. Soc. 358 (2006), no. 4, 1455–1468.

\bibitem{BL}
  Ch. B\"{o}hm, R. A. Lafuente, 
  Non-compact Einstein manifolds with symmetry,
  Journal of the American Mathematical Society, 36 (3), 591-651, 2023.
  
\bibitem{BWZ}
  Ch. B\"{o}hm, M. Wang, W. Ziller,
  A variational approach for compact homogeneous Einstein manifolds,
  Geom. Funct. Anal. 14 (2004), no. 4, 681–733.

\bibitem{CR}
  D. Conti, F. A. Rossi,
  Construction of nice nilpotent Lie groups,
  J. Algebra, 525 (2019), 311–340.

\bibitem{C}
  V. C\'{o}rtes,
  A New Construction of Homogeneous Quaternionic Manifolds and Related Geometric Structures
  American Mathematical Society: Memoirs of the American Mathematical Society, 2000.

\bibitem{DR}
  E. Damek, F. Ricci,
  A class of nonsymmetric harmonic Riemannian spaces,
  Bull. Amer. Math. Soc. 27 (1992), 139-142.

\bibitem{Gr}
  M. M. Graev,
  The existence of invariant Einstein metrics on a compact homogeneous space,
  Trans. Moscow Math. Soc. 2012, 1–28.
      
\bibitem{Heb}
  J. Heber, 
  Noncompact homogeneous Einstein spaces, 
  Invent. Math. 133 (1998), no. 2, 279–352.
  
\bibitem{J}
  M. Jablonski,
  Homogeneous Einstein manifolds,
  Revista de la Uni\'{o}n Matem\'{a}tica Argentina,
  Vol. 64, No. 2, 2023, pp. 461–485.

\bibitem{K}
  A. Kaplan,
  Riemannian nilmanifolds attached to Clifford modules,
  Geom Dedicata 11, 127–136 (1981).

\bibitem{L1}
  J. Lauret,
  Einstein solvmanifolds are standard,
  Ann. of Math. (2) 172 (2010), no. 3, 1859–1877.

\bibitem{L2}
  J. Lauret, 
  Ricci soliton homogeneous nilmanifolds, 
  Math. Ann. 319 (2001), no. 4, 715–733.

\bibitem{L3}
  J. Lauret,
  Finding Einstein solvmanifolds by a variational method,
  Math. Z., 241 (2002), 83 – 99.

\bibitem{LW}
  J. Lauret, C. Will,
  Einstein solvmanifolds: existence and non-existence questions,
  Math. Ann. 350, 199–225 (2011).
 
\bibitem{M}
  K. Mori,
  Einstein metrics on Boggino-Damek-Ricci type solvable Lie groups,
  Osaka J. Math. 39(2): 345-362 (June 2002).

\bibitem{Nik1}
  Y. Nikolayevsky,
  Einstein solvmanifolds and the pre-Einstein derivation,
  Trans. Amer. Math. Soc. 363 (2011), 3935-3958.
  
\bibitem{Nik2}
  Y. Nikolayevsky,
  Einstein solvmanifolds with a simple Einstein derivation,
  Geom. Dedicata, 135 (2008), 87 - 102.

\bibitem{Nik3}
  Y. Nikolayevsky,
  Einstein solmanifolds with free nilradical,
  Ann. Global Anal. Geom., 33 (2008), 71 – 87.
  
\bibitem{N1}
  Yu. G. Nikonorov,
  Compact homogeneous Einstein 7-manifolds,
  Geom. Dedicata 109 (2004), 7–30.

\bibitem{N2}
  Yu. G. Nikonorov,  E. D. Rodionov,
  Compact homogeneous Einstein 6-manifolds,
  Differential Geom. Appl. 19 (2003), no. 3, 369–378.  

\bibitem{P}
  T. Payne,
  The existence of soliton metrics for nilpotent Lie groups,
  Geom. Dedicata, 145 (2010), 71–88.
  
\bibitem{W1}
  C. Will,
  Rank-one Einstein solvmanifolds of dimension 7,
  Differential Geom. Appl., 19 (2003), 307 – 318.

\bibitem{W2}
  C. Will,
  The space of solvsolitons in low dimensions,
  Ann Glob Anal Geom 40, 291–309 (2011).  

\end{thebibliography}
\end{document}